\newcommand{\ignore}[1]{}
\newtheorem{theorem}{Theorem}
\newtheorem{proposition}[theorem]{Proposition}
\newtheorem{property}{Property}
\theoremstyle{definition}
\newtheorem{definition}[theorem]{Definition}
\numberwithin{equation}{section}
\numberwithin{theorem}{section}
\newcommand{\m}{\mathbb}
\DeclareMathOperator*{\cupp}{\cup}
\author{Guillaume Garnier\thanks{Laboratoire Jacques-Louis Lions, Sorbonne Université, INRIA, 4 place Jussieu, Paris, France.
}, Bruno Ziliotto\thanks{CEREMADE, CNRS, Universit\'e Paris Dauphine, PSL Research Institute, Paris, France.
}}
\title{Percolation games}
\date{}
\begin{document}
\maketitle
\bibliographystyle{plain}
\begin{abstract}
This paper introduces a discrete-time stochastic game class on $\m{Z}^d$, which plays the role of a toy model for the well-known problem of stochastic homogenization of Hamilton-Jacobi equations. Conditions are provided under which the $n$-stage game value converges as $n$ tends to infinity, and connections with homogenization theory is discussed. 
\end{abstract}
\section{Introduction}
Shapley \cite{SH53} has introduced the seminal model of \textit{zero-sum stochastic games}. It involves two players playing repeatedly a zero-sum game that depends on a variable called \textit{state}. 
The state evolves stochastically from one stage to the other, and its law at stage $m+1$ is determined by the state and the actions at stage $m$. Players know the current state before playing, and observe each other's past actions. Action and state sets are finite. In the \textit{$n$-stage game}, the goal of Player 1 is to maximize the expected average payoff over $n$ stages, and Player 2's goal is to minimize it. The $n$-stage game has a \textit{value}, which is the maxmin (or the minmax) of the $n$-stage payoff over players' strategy sets. The value is denoted by $v_n$, and can be interpreted as the payoff solution of the game: when players play rationally, Player 1 should get $v_n$, while Player 2 should get the opposite. The study of stochastic games with \textit{long duration} has been a very active field of research in the past decades. Bewley and Kohlberg \cite{BK76} have proved that $(v_n)$ converges as $n$ tends to infinity. The \textit{limit value} can hence be interpreted as an approximate payoff solution of the game with long duration. Following this seminal work, limit value has been proven to exist in several variations of this model, including stochastic games with unobserved states or actions (see \cite{JN16,SZ16} for surveys), while several counter-examples have been pointed out in stochastic games with finite action sets or state space \cite{vigeral13,Z13}. Related questions such as the dependence of optimal strategies with respect to $n$ (\textit{uniform approach} \cite{MN81}) or the characterization and computability of the limit value \cite{AOB18,OB21,HKLMT11} have inspired fruitful lines of research. 

Recently, 
stochastic games have been used to handle problems arising in Partial Differential equations \cite{KS06,KS10,IS11,Z15}, such as Hamilton-Jacobi or Laplace equations \cite{AS12,PSSW09,PS08,LPS13}. More specifically, this paper is motivated by the problem of 
\textit{stochastic homogenization}  of Hamilton-Jacobi equations. The latter is a class of PDEs that is widely used to study Hamiltonian systems arising in mechanics. A central research area is to study limit properties of solutions when the time scale and space scale become larger and larger: when solutions converge in some sense, we say that the PDE system \textit{homogenizes}. Lions, Papanicolaou and Varadhan \cite{LPV86} proved that Hamilton-Jacobi equations with a space-periodic Hamiltonian do homogenize. 
The extension of these results to the case of random Hamiltonians with a stationary ergodic law (\textit{stochastic homogenization}) has motivated a vast literature in the last 30 years, featuring a variety of mathematical tools such as weak KAM theory, first-passage percolation, or large deviations. Though huge progress has been achieved for convex Hamiltonians, the non-convex case is far from being understood. Its importance is both theoretical and practical, since non-convexity 
gives rise to challenging mathematical phenomena (e.g. \textit{envelope shocks} \cite{evans14} or \textit{lack of sub-additivity} \cite{AC15}), and appears  in many applications, including \textit{equations of forced mean curvature motion} that describe in particular fire front propagation \cite{peters01,williams85}.

Hamilton-Jacobi equations can be associated with a class of continuous-time two-player zero-sum games called \textit{differential games}: the value of such differential games satisfy Hamilton-Jacobi equations, and conversely, for each ``standard" Hamilton-Jacobi equation, there exists a differential game which value satisfies this equation \cite{ES84}. Moreover, the 1-Player case corresponds to convex Hamiltonian, and the 2-Player case corresponds to non-convex Hamiltonians. Furthermore, homogenization is related to the existence of limit value in such games. 
This connection has been used by the second author, who provided examples of non-convex Hamilton-Jacobi equation that do not homogenize in a stationary ergodic environment \cite{Z17,FFZ21}. 
The examples are themselves inspired from a stochastic game with state space $\m{Z}^2$, that does not have a limit value. Despite these examples, the general question of which non-convex equation homogenize remains largely uncharted. 

The main goal of this paper is to provide a discrete-time stochastic game framework that is well-adapted to the study of non-convex stochastic homogenization, in the sense that understanding existence of limit value in such games is likely to infer new results in the related PDE problem. Such an approach has several advantages. First, putting the problem in a discrete-time game framework allows to use in an elementary way the \textit{strategic approach}, that is, putting oneself in players' shoes and guessing what is the best decision to take. More generally, this enables to use the abundant literature on existence of limit value in discrete-time games. Last, this paper will make the homogenization problem more accessible to game-theorists, and bring to PDE theorists a new perspective on the problem.

More specifically, this paper presents a new stochastic game model called \textit{percolation games}. It features a state space $\m{Z}^d$, $d \geq 1$, and random payoffs, with stationary ergodic distribution across $\m{Z}^d$. Limit value is proved to exist under the assumptions that payoff distribution is i.i.d. across $\m{Z}^d$, and the game is ``oriented'', meaning that the state always moves in the same direction, irrespective of players' actions. An example of an oriented percolation game without a limit value is then presented. Furthermore, connections with homogenization are discussed, and several promising research directions are presented. 
%
\section{Model and results}
A \textit{percolation game} is a tuple $\Gamma=(I,J,\Omega,g,q)$, where:
\begin{itemize}
\item
$I$ and $J$ are finite sets representing respectively Player 1's action set and Player 2's action set,
\item
$\mathcal{E}=(\Omega,\mathcal{F},\m{P})$  is a probability space, equipped with a family of measurable transformations $\tau_z: \Omega \rightarrow \Omega, z \in \m{Z}^d$, satisfying for all $z, z' \in \m{Z}^d$ and $F \in \mathcal{F}$
\begin{equation*}
\m{P}(\tau_z(F))=\m{P}(F) \quad \text{and} \quad \tau_{z+z'}=\tau_z \circ \tau_{z'}.
\end{equation*}
Moreover, $\tau$ is \textit{ergodic}: if $F \in \mathcal{F}$ is \textit{invariant}, that is, for all $z \in \m{Z}^d$, $\tau_z(F)=F$, then $\m{P}(F) \in \left\{0,1\right\}$. 
\item
$g=(\omega \rightarrow g_{\omega}(z,i,j)), \ (z,i,j) \in \m{Z}^d \times I \times J$ is a collection of real-valued random variables defined on $\Omega$, uniformly bounded and \textit{stationary}, that is:
for all $(z,z',i,j,\omega)$,
\begin{equation*}
g_{\tau_z(\omega)}(z',i,j)=g_{\omega}(z+z',i,j).
\end{equation*}
 The last two assumptions can be summarized by saying that the law of $g$ is invariant by space translation (stationarity), and any event that is invariant by space translation has either probability 0 or 1 (ergodicity). When the random variables $\left\{\omega \rightarrow (g_\omega(z,i,j))_{(i,j) \in I \times J}\right\}_{z\in \m{Z}^d}$ are i.i.d., these assumptions are satisfied. 
\item
$q$ is the \textit{transition function}, and is a mapping from $I \times J$ to $\m{Z}^d$.
\end{itemize}
Given some initial state $z \in \m{Z}^d$ and $\omega \in \Omega$, the game proceeds as follows: 
\begin{itemize}
\item
Players are informed of $\omega$,
\item
At each stage $m \geq 1$, Player 1 chooses an action $i_m$, and then knowing Player 1's action, Player 2 chooses an action $j_m$. Players 1 and 2 receive respectively payoffs $g_{\omega}(z_m,i_m,j_m)$ and $-g_{\omega}(z_m,i_m,j_m)$. Then,
the new state is $z_{m+1}:=z_m+q(i_m,j_m)$. 
\end{itemize}
A \textit{strategy} for Player 1 is a collection of mappings $\sigma_m: (\m{Z}^d \times I \times J)^{m-1} \times \m{Z}^d \rightarrow I, m \geq 1$, and a strategy for Player 2 is 
a collection of mappings $\tau_m: (\m{Z}^d \times I \times J)^{m-1} \times \m{Z}^d \times I \rightarrow J, m \geq 1$. The set of strategies of Player 1 (resp., Player 2) is denoted by $\Sigma$ (resp., $T$). 
An initial state $z \in \m{Z}^d$ and a pair $(\sigma,\tau)$ induce a sequence $(z_m,i_m,j_m)_{m \geq 1}$, where $z_m$ is the state at stage $m$, $i_m$ is the action for Player 1, and $j_m$ is the action for Player 2. 
The \textit{$n$-stage game} $\Gamma^\omega_n(z)$ is the zero-sum game with strategy sets $\Sigma$ and $T$, and payoff function $\gamma^{\omega}_n:\Sigma \times T \rightarrow \m{R}$ defined by
\begin{equation*}
\gamma^{\omega}_n(z,\sigma,\tau):=\frac{1}{n} \sum_{m=1}^n g_\omega(z_m,i_m,j_m).
\end{equation*}
The \textit{value} of $\Gamma^\omega_n(z)$ is the real number $v^{\omega}_n(z)$ such that
\begin{equation*}
v^{\omega}_n(z):=\max_{\sigma \in \Sigma} \min_{\tau \in T} \gamma^{\omega}_n(z,\sigma,\tau)= \min_{\tau \in T} \max_{\sigma \in \Sigma} \gamma^{\omega}_n(z,\sigma,\tau).
\end{equation*}
In the sequel, $v_n(z)$ designates the random variable $\omega \rightarrow v_n^{\omega}(z)$.
We now introduce key assumptions behind our main result: 
\begin{definition}
A game is \textit{oriented} if there exists $u \in \m{Z}^d$ such that for all $(i,j) \in I \times J$, $q(i,j)\cdot u>0$. 
\end{definition}
In an oriented game, whatever be players actions, the coordinate of the state with respect to $u$ always increases. The increasing rate may depend on players actions. 
\begin{definition}
A percolation game is i.i.d. if the random variables $\left\{\omega \rightarrow (g_\omega(z,i,j))_{(i,j) \in I \times J}\right\}_{z\in \m{Z}^d}$ are i.i.d.
\end{definition}

We are now ready to state our main result.
\begin{theorem} \label{theo:oriented}
Consider a percolation game that is i.i.d. and oriented. For all $z \in \m{Z}^d$, $v_n(z)$ converges $\m{P}$-a.s. to a constant $v_\infty \in \m{R}$, as $n$ tends to infinity. Moreover, there exists $A, B>0$ such that for all $n \geq 1$, for all $z \in \m{Z}^d$ and $\lambda \geq 0$,
\begin{equation} \label{eq:conc}
\m{P}\left(|v_n(z)-v_\infty| \geq \lambda+A \ln(n+1)^{1/2} n^{-1/2}\right) \leq \exp\left(-B \lambda^2 n \right). 
\end{equation}
In particular, $\m{E}(v_n)$ converges to $v_\infty$ at a rate $O(\ln(n)n^{-1/2})$. 
\end{theorem}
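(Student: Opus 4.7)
The plan is to establish three claims whose combination yields the theorem:
(i) by stationarity, the expectation $\bar v_n := \m{E}\, v_n(z)$ does not depend on $z$;
(ii) $v_n(z)$ concentrates around $\bar v_n$ at rate $e^{-B\lambda^2 n}$;
(iii) the deterministic sequence $\bar v_n$ converges to some $v_\infty \in \m{R}$ at rate $O(\sqrt{\log(n+1)/n})$.
Granting (ii) and (iii), the bound \eqref{eq:conc} follows from the triangle inequality $|v_n - v_\infty| \leq |v_n - \bar v_n| + |\bar v_n - v_\infty|$, and the almost-sure convergence follows from Borel--Cantelli applied with, for instance, $\lambda_n = n^{-1/4}$. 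Claim (i) is immediate: from the stationarity relation $g_{\tau_z \omega}(z', i, j) = g_\omega(z + z', i, j)$ one derives $v_n(z, \omega) = v_n(0, \tau_z \omega)$, and the $\m{P}$-invariance of $\tau_z$ gives $\m{E}\, v_n(z) = \m{E}\, v_n(0)$.

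For claim (ii), I exploit orientation via a \emph{layered martingale}. Since $q(i, j) \cdot u \geq 1$ for all $(i, j) \in I \times J$ (recall $q$ is integer valued), any trajectory of an $n$-stage game strictly increases its coordinate along $u$, and therefore visits each layer $L_\ell := \{z' \in \m{Z}^d : z' \cdot u = \ell\}$ at most once. Let $R_n$ be the finite set of sites reachable from $z$ in at most $n$ stages; it intersects $O(n)$ layers. The random variables $X_\ell := (g_\omega(z', i, j))_{z' \in R_n \cap L_\ell,\, (i,j) \in I \times J}$ form an independent family by the i.i.d.\ assumption. The key observation is that, for any \emph{deterministic} strategy pair $(\sigma, \tau)$, the induced trajectory $(z_m)$ depends only on $(\sigma, \tau, z)$ and not on $\omega$; consequently, modifying the payoffs in a single layer alters $\gamma^\omega_n(z, \sigma, \tau)$ by at most $2\|g\|_\infty/n$, since at most one term of the sum is affected. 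A standard coupling argument transfers this bound to the value $v_n(z) = \max_\sigma \min_\tau \gamma^\omega_n$, giving bounded differences with constants $c_\ell = 2\|g\|_\infty/n$. Since $\sum_\ell c_\ell^2 = O(1/n)$, McDiarmid's inequality yields $\m{P}(|v_n(z) - \bar v_n| \geq \lambda) \leq 2 \exp(-B\lambda^2 n)$.

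For (iii), I combine strategic concatenation with (ii). Fix $n, m \geq 1$ and let Player 2 play, in the $(n+m)$-stage game from $z=0$, her $n$-stage optimal strategy for stages $1, \dots, n$, then her $m$-stage optimal strategy starting from the reached state $Z_{n+1}$ for stages $n+1, \dots, n+m$. Decoupling Player 1's best response over the two halves yields
\begin{equation*}
(n+m)\, v_{n+m}(0) \;\leq\; n \, v_n(0) \,+\, m \max_{z' \in R_n} v_m(z').
\end{equation*}
Taking expectations, using (i), and controlling the maximum by a union bound based on (ii) to get $\m{E} \max_{z' \in R_n}(v_m(z') - \bar v_m) = O(\sqrt{\log |R_n|/m}) = O(\sqrt{\log n / m})$ gives
\begin{equation*}
\bar v_{n+m} \;\leq\; \frac{n\bar v_n + m\bar v_m}{n+m} \,+\, C\,\frac{\sqrt{m \log n}}{n+m}.
\end{equation*}
A symmetric argument (Player 1 concatenating his $n$-stage and $m$-stage optimal strategies) gives the matching lower bound. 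Specializing to $m = n$ produces $|\bar v_{2n} - \bar v_n| = O(\sqrt{\log n / n})$; telescoping along powers of $2$ (with a dyadic interpolation for general $n$) shows that $(\bar v_n)$ is Cauchy with limit $v_\infty$ and $|\bar v_n - v_\infty| = O(\sqrt{\log n / n})$.

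The main obstacle is the interplay between (ii) and (iii): the concatenation in (iii) introduces a maximum over the random state $Z_{n+1}$, which must be relaxed to the deterministic maximum over $R_n$ at the cost of a $\sqrt{\log n}$ factor, and this is precisely the source of the $\sqrt{\log(n+1)/n}$ correction in \eqref{eq:conc}. The layered martingale in (ii) crucially uses the orientation hypothesis: without it, a trajectory could revisit a layer and the bounded-differences constants would blow up with the maximum multiplicity, destroying the $d$-independent concentration rate.
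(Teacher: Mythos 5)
Your proposal is correct and follows essentially the same route as the paper: concentration of $v_n$ around its mean via bounded differences across the hyperplane layers orthogonal to $u$ (you invoke McDiarmid where the paper runs the equivalent Doob-martingale/Azuma argument, both resting on the same observation that orientation lets each layer be visited at most once), a union bound over the $O(n^d)$ reachable states to control the concatenation error, and approximate subadditivity of $n\,\m{E}(v_n)$ with a $\sqrt{n\log n}$ error term yielding the limit and the $\sqrt{\log(n+1)/n}$ rate. The only cosmetic difference is that you telescope a two-sided dyadic estimate directly, whereas the paper establishes one-sided approximate superadditivity, invokes the de Bruijn--Erd\H{o}s subadditivity-with-error theorem for convergence, and obtains the other inequality by exchanging the players' roles.
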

\begin{theorem} \label{theo:ce}
There exists an oriented percolation game in $\m{Z}^3$, with payoffs in $\left\{0,1\right\}$, such that for all $z \in \m{Z}^3$, $\m{P}$-almost surely \emph{($\m{P}$-a.s.)}, 
\begin{equation*}
\limsup_{n \rightarrow +\infty} v_n(z)=1 \quad \text{and} \quad \liminf_{n \rightarrow +\infty} v_n(z)=0.
\end{equation*}
\end{theorem}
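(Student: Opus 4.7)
The plan is to exhibit an explicit oriented percolation game on $\m{Z}^3$ whose $n$-stage value oscillates between $0$ and $1$ along two different subsequences of stages. Theorem~\ref{theo:oriented} forbids any counter-example with i.i.d.\ payoffs, so the payoff law has to be stationary ergodic but \emph{not} i.i.d., and the construction will exploit strong long-range correlations along the oriented axis via a multi-scale hierarchical structure.

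\textbf{Game and environment.} Take $I=J=\{-1,+1\}$ and the transition $q(i,j)=(i,j,1)\in\m{Z}^3$: the orientation is $u=e_3$ (since $q(i,j)\cdot u=1$), Player~1 controls the first transverse coordinate and Player~2 the second. Payoffs lie in $\{0,1\}$ and depend only on the position. Choose a rapidly increasing sequence of scales $L_1\ll L_2\ll L_3\ll\cdots$. Construct a deterministic partition of the $e_3$-axis into slabs of thicknesses $L_k$ arranged in a prescribed hierarchical fashion, and inside each slab of thickness $L_{2k+1}$ put a $2$-dimensional $\{0,1\}$-payoff pattern such that, whatever Player~2 does across the slab, the accumulated average payoff is at least $1-\varepsilon_{L_{2k+1}}$; dually, inside slabs of thickness $L_{2k}$ the pattern is chosen so that Player~2 has a strategy forcing average payoff at most $\varepsilon_{L_{2k}}$, with $\varepsilon_L\to 0$. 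Finally, randomise the construction by shifting it along $e_3$ (and the transverse pattern along $e_1,e_2$) by independent uniform variables: a standard argument makes the resulting environment stationary and ergodic under the full $\m{Z}^3$-action, though obviously not i.i.d.

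\textbf{Value oscillation.} Set $n_k'=L_{2k+1}$ and $n_k''=L_{2k}$. Because the orientation is strict, the trajectory satisfies $z_m\cdot u=z\cdot u+m$ regardless of the actions, so after an initial phase of length $o(L_k)$ used to reach the nearest slab of the favoured type the trajectory is trapped inside a single Player~1-favourable (resp.\ Player~2-favourable) slab for the entire interval $[o(n_k'),n_k']$ (resp.\ $[o(n_k''),n_k'']$). The internal structure of that slab then forces the average payoff to be $1-o(1)$ or $o(1)$ respectively, yielding $v_{n_k'}(z)\to 1$ and $v_{n_k''}(z)\to 0$ $\m{P}$-a.s., hence $\limsup v_n(z)=1$ and $\liminf v_n(z)=0$ for every $z\in\m{Z}^3$.

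\textbf{Main obstacle.} The hardest component is designing the $2$-dimensional slab patterns so that the chosen player dominates the induced sub-game irrespective of the transverse starting position: one needs a Maker--Breaker-type configuration in which one side has a robust path strategy through a $\{0,1\}$-labelled $\m{Z}^2$-grid with only $\{-1,+1\}$ transverse moves per stage, the key point being that the third dimension leaves exactly enough room for such a path argument to succeed in both directions alternately. A secondary technical issue is combining these patterns across infinitely many scales into a single stationary ergodic probability space on $\m{Z}^3$; the natural fix is to define $\Omega$ as the orbit closure of the hierarchical pattern under $\m{Z}^3$-shifts, equip it with the invariant measure obtained from the random shifts above, and check ergodicity by direct inspection of the shift action.
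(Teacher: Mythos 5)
Your architecture is genuinely different from the paper's, and unfortunately it has a fatal structural flaw: you arrange the favourable regions as slabs \emph{perpendicular} to the oriented direction $e_3$. Since $q(i,j)\cdot e_3=1$ for all actions, the height $z_3+m$ at stage $m$ is deterministic, so the sequence of slab-types the trajectory visits is completely forced; the players have no say in how long they spend in a Player-1 slab versus a Player-2 slab. Writing $P^1_h$ (resp.\ $P^2_h$) for the indicator that height $h$ lies in a Player-1 (resp.\ Player-2) slab, your scheme needs $\frac1n\sum_{m\le n}P^1_{z_3+m}\to 1$ along $n_k'$ and $\frac1n\sum_{m\le n}P^2_{z_3+m}\to 1$ along $n_k''$. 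But once the environment is made $\m{Z}^3$-stationary, these are one-sided Birkhoff averages of two stationary $\{0,1\}$-sequences with $P^1_h+P^2_h\le 1$, so both averages converge a.s.\ and their limits sum to at most $1$; they cannot both have limsup close to $1$. The same obstruction appears in your ``initial phase of length $o(L_k)$ used to reach the nearest slab of the favoured type'': the nearest Player-1 slab of thickness $L_{2k+1}$ sits at a height-distance governed by the spacing of such slabs, which (since slabs of all scales must share the axis) cannot be $o(L_{2k+1})$ at infinitely many scales for both types simultaneously. This is precisely why the paper places its good regions \emph{parallel} to the direction of motion: the 1-squares are orthogonal to $(1,0,0)$, hence contain the $e_3$-axis in their plane, so a player who steers into one (using the ``stay'' action $0$, which your action set $\{-1,+1\}$ lacks) can ride inside it for a number of stages comparable to its side length, while the opponent cannot escape it transversally in time. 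The oscillation then comes from which type of complete square happens to be within distance $\varepsilon T_k$ of the origin in the full three-dimensional metric at scale $k$, which the paper guarantees at infinitely many scales via independent Bernoulli$(T_k^{-3})$ placements, a positive-liminf-probability estimate, and ergodicity.

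Two secondary points. First, the combinatorial heart you defer (the Maker--Breaker slab patterns) is where the paper's actual idea lives, so deferring it leaves the proof essentially empty; note also that if you resolved it by making slabs constant in the transverse directions, the value would converge outright by the ergodic theorem along the $e_3$-fibre. Second, your stationarization is not valid as stated: there is no uniform distribution on $\m{Z}^3$ with which to shift an aperiodic hierarchical pattern, and the orbit-closure construction produces an invariant measure for which you would still have to verify ergodicity and, more importantly, that almost every configuration retains the required multi-scale recurrence near the origin --- none of which is automatic. The paper sidesteps all of this by building the environment from an explicit i.i.d.\ family $(X^j_{k,z})$, for which stationarity is immediate and ergodicity follows from a short independence-of-distant-translates argument.
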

\section{Proofs}
\subsection{Proof of Theorem \ref{theo:oriented}}
Denote $\left\|g\right\|_\infty:=\sup_{(z,i,j) \in \m{Z}^d \times I \times J} |g(z,i,j)|$ and $\left\|q\right\|_\infty:=\sup_{(i,j,k) \in I \times J \times [1..d]} \left|[q(i,j)]_k \right|$. Let $n \geq 1$ and $H:=\left\{z \in \m{Z}^d \ | \ z \cdot u =0 \right\}$. There exists a constant $C$ depending only on $u$ and $d$, such that the discrete cube $\mathcal{C}:=[0.. n \cdot \left\|q\right\|_\infty]^d$ can be covered by
a collection $(H_r)_{1 \leq r \leq Cn}$ of discrete hyperplanes that are translations of $H$: for each $r \in [1..Cn]$, the set $H_r$ can be written as
$H_r=\left\{z \in \m{Z}^d| z \cdot u=h_r\right\}$, for some $h_r \in \m{Z}$, and 
$\mathcal{C} \subset \cup_{r=1}^{Cn} H_r$. For $r \in [1..Cn]$, let $\mathcal{F}_r$ be the $\sigma$-algebra generated by the random variables $\omega \rightarrow g_\omega(z,i,j), \ (z,i,j) \in\cupp_{r'=1}^{r} H_{r'} \times I \times J$, and let $\mathcal{F}_0$ be the trivial $\sigma$-algebra. 
We claim that for $r \in [0..Cn-1]$,
\begin{equation} \label{eq_increment}
|\m{E}(v_n(0)|\mathcal{F}_{r+1})-\m{E}(v_n(0)|\mathcal{F}_r)| \leq \frac{2\left\|g\right\|_\infty}{n} \quad \m{P}-\text{a.s.},
\end{equation}
where 0 stands for the origin of $\m{Z}^d$. 
%
Indeed, define an auxiliary game $\Gamma'$ with the same action sets and transition function, and with payoff function defined for all $(\omega,i,j) \in \Omega \times I \times J$ by $g_\omega'(z,i,j)=0$ when $z \in H_{r+1}$, and $g_\omega'(z,i,j)=g_\omega(z,i,j)$ otherwise. For $\omega \in \Omega$, let $\gamma'^\omega_n$ be the $n$-stage payoff, and $v'^\omega_n$ the $n$-stage value. 
Because $\Gamma$ is oriented, irrespective of players' strategies, there can be at most one stage where the state lies in $H_{r+1}$. It follows that for all $(\sigma,\tau) \in \Sigma \times T$,
\begin{eqnarray*}
|\gamma'^{\omega}_n(0,\sigma,\tau)-\gamma^{\omega}_n(0,\sigma,\tau)| \leq \frac{\left\|g\right\|_\infty}{n},
\end{eqnarray*}
hence $\left|v'_n(0)-v_n(0)\right| \leq \left\|g\right\|_{\infty}$ $\m{P}$-a.s.
Moreover, because $\Gamma$ is i.i.d., $(v'_n(0),\mathcal{F}_r)$ is independent of random variables $\left\{\omega \rightarrow (g_\omega(z,i,j))_{(i,j) \in I \times J}, \ z \in H_{r+1} \right\}$. It follows that
$\m{E}(v'_n(0)|\mathcal{F}_{r+1})=\m{E}(v'_n(0)|\mathcal{F}_{r}) \ \m{P}\text{-a.s.}$, hence
\begin{eqnarray*}
|\m{E}(v_n(0)|\mathcal{F}_{r+1})-\m{E}(v_n(0)|\mathcal{F}_r)| &\leq& \frac{2\left\|g\right\|_\infty}{n} \quad \m{P}-\text{a.s.} 
\end{eqnarray*}
Let $W_r:=\m{E}(v_n(0)|\mathcal{F}_r), r \in [0..Cn]$. The process $(W_r)_{r \in [0..An]}$ is a martingale, and $W_0=\m{E}(v_n(0))$. By the stationarity assumption, the expectation of $v_n(z)$ is independent of $z$, thus we will write $\m{E}(v_n)$ instead of $\m{E}(v_n(0))$. Moreover, in the $n$-stage game starting from 0, the state stays in 
$\mathcal{C}$, hence
$v_n(0)$ is $\mathcal{F}_{Cn}$-measurable: $W_{Cn}=v_n(0)$.
 Consequently, by Azuma's inequality and \eqref{eq_increment}, for all $\lambda \geq 0$,
\begin{equation} \label{ineq_azuma}
\m{P}(|v_n(0)-\m{E}(v_n)| \geq \lambda) \leq \exp\left(\frac{-\lambda^2 n}{8\left\|g\right\|_\infty^2}\right).
\end{equation}
Using the union bound, and then the fact that variables $v_n(z)$ and $v_n(0)$ have the same distribution by stationarity, 
\begin{align*}
\m{P}(\exists z \in \mathcal{C}, |v_n(z)-\m{E}(v_n)| \geq \lambda) &\leq \sum_{z \in \mathcal{C}}  \m{P}(|v_n(z)-\m{E}(v_n)| \geq \lambda) 
\\
&=  \sum_{z \in \mathcal{C}}  
 \m{P}(|v_n(0)-\m{E}(v_n)| \geq \lambda)  
\\
&\leq (2 n \cdot \left\|q\right\|_\infty +1)^d \exp\left(\frac{-\lambda^2 n}{8\left\|g\right\|_\infty^2}\right).
\end{align*}
Taking $\lambda_n:=2 \sqrt{2} \ln(2n \cdot \left\|q\right\|_\infty+1)^{1/2}  (d+1)\left\|g\right\|_\infty n^{-1/2}$, we get 
\begin{eqnarray*}
\m{P}(\exists z \in \mathcal{C}, |v_n(z)-\m{E}(v_n)| \geq \lambda_n)
&\leq& (2 n \cdot \left\|q\right\|_\infty +1)^{-1} \leq n^{-1} ,
\end{eqnarray*}
hence
\begin{equation*}
\m{P}\left(\min_{z \in \mathcal{C}}{v_n(z)} \leq  \m{E}(v_n)-\lambda_n \right) \leq n^{-1}. 
\end{equation*}
We deduce that
\begin{eqnarray*}
\m{E}\left(\min_{z \in \mathcal{C}}{v_n(z)}\right) &\geq& -\m{P}\left(\min_{z \in \mathcal{C}}{v_n(z)} \leq  \m{E}(v_n)-\lambda_n \right) \left\|g\right\|_\infty
\\
&+&\m{P}\left(\min_{z \in \mathcal{C}}{v_n(z)} \geq  \m{E}(v_n)-\lambda_n \right)(\m{E}(v_n)-\lambda_n)
\\
&\geq& -n^{-1} \left\|g\right\|_\infty +\m{E}(v_n)-\lambda_n-n^{-1} |\m{E}(v_n)-\lambda_n|
\end{eqnarray*}
Hence, there exists $D>0$ such that for all $n \geq 1$,
\begin{equation*}
\m{E}\left(\min_{z \in \mathcal{C}}{v_n(z)}\right) \geq \m{E}(v_n) -D \ln(n+1)^{1/2} n^{-1/2}.
\end{equation*}
Let $n \geq 1$ and $m \in [1..n]$. 
Consider the $(m+n)$-stage game starting from 0, and the following strategy for Player 1: play optimally in the $m$-stage game, then again optimally in the $n$-stage game starting from state $z_{m+1}$. At stage $m+1$, the state is in $\mathcal{C}$, hence this strategy guarantees $\frac{m}{m+n} v_m(0)+\frac{n}{m+n} \min_{z \in \mathcal{C}}{v_n(z)}$ $\m{P}$-a.s., and
\begin{eqnarray}
\m{E}(v_{m+n}) &\geq& \frac{m}{m+n} \m{E}(v_m)+\frac{n}{m+n} \m{E}\left(\min_{z \in \mathcal{C}}{v_n(z)}\right) 
\nonumber \\ \label{eq:sub}
&\geq&
\frac{m}{m+n} \m{E}(v_m)+\frac{n}{m+n}\left[\m{E}(v_n) -D \ln(n+1)^{1/2} n^{-1/2}\right].
\end{eqnarray}
For all $n \geq 1$, set $a_n:=-n \m{E}(v_n)$, and $f:=D \ln(n+1)^{1/2} n^{1/2}$. By the previous inequality, the sequence $(a_n)$ is \textit{subadditive with an error term $f$} \cite{BE52}: 
$f$ is non-negative and non-decreasing, and for all $m,n \geq 1$, $a_{m+n} \leq a_m+a_n+f(m+n)$. Moreover, 
$\sum_{n \geq 1} f(n)/n^2<+\infty$. By \cite{BE52}, $a_n/n$ converges, hence $\m{E}(v_n)$ converges. Call $v_\infty$ the limit. To obtain the rate of convergence,
inequality \eqref{eq:sub} gives by induction that for all $\ell \geq 1$ and $n \geq 1$,
\begin{eqnarray*} 
\m{E}(v_{2^{\ell} n}) &\geq& 
\m{E}(v_n)-D\sum_{k=0}^{\ell-1} \ln(2^{k}n+1)^{1/2} 2^{-k/2} n^{-1/2}
\\
&\geq& \m{E}(v_n)-A \ln(n+1)^{1/2} n^{-1/2},
\end{eqnarray*}
where $A$ is a universal constant. 


Making $\ell$ tends to infinity, this yields for all $n \geq 1$,
\begin{eqnarray*}
v_\infty \geq \m{E}(v_n)-A \ln(n+1)^{1/2} n^{-1/2}.
\end{eqnarray*}
Switching Players 1 and 2's roles, we obtain that for all $n \geq 1$,
\begin{equation*}
\left|v_\infty-\m{E}(v_n)\right| \leq A \ln(n+1)^{1/2} n^{-1/2}.
\end{equation*}
Let us conclude on the proof of Theorem \ref{theo:oriented}.
Let $\lambda \geq 0$. Combining the above inequality with \eqref{ineq_azuma}, we get that for all $n \geq 1$,
\begin{eqnarray*}
\m{P}(|v_n(0)-v_\infty| \geq \lambda+ A \ln(n+1)^{1/2} n^{-1/2}) &\leq&
 \exp\left(\frac{-\lambda^2 n}{8\left\|g\right\|_\infty^2}\right).
 \end{eqnarray*}
 Thus, inequality \eqref{eq:conc} holds for $B:=(8\left\|g\right\|_\infty^2)^{-1}$. Last, this inequality implies that $(v_n(0))$ converges $\m{P}$-almost surely, 
 and by stationarity, similar properties hold for $v_n(z)$, $z \in \m{Z}^d$. 

 \subsection{Proof of Theorem \ref{theo:ce}}
 Consider a percolation game on $\m{Z}^3$, with action sets $I=J=\left\{-1,0,1\right\}$, and transition function $q(i,j)=(i,j,1)$. Hence, Player 1 can move the state left or right or keep it at the same abscissa, and Player 2 can move the state backward or forward, or keep it at the same ordinate. Last, independently of players'actions, the state moves one edge up at each stage. This is an oriented game, with direction $u=(0,0,1)$. 
%
We now define the payoff function.  

Let $(T_k)$ be the sequence defined for $k \geq 1$ by $T_k=2^k$. 
Let $(X^j_{k,z})_{(j,k,z) \in \left\{1,2\right\} \times \m{N}^*\times \m{Z}^3}$ be a sequence of independent random variables defined on a probability space $(\Omega,\mathcal{F},\m{P})$, such that for all $(j,k,z) \in \left\{1,2\right\} \times \m{N}^* \times \m{Z}^3$, $X^j_{k,z}$ follows a Bernoulli of parameter $T _k^{-3}$. 
 Payoffs depend only on the state variable, and for each $\omega \in \Omega$, the payoff function $g_{\omega}:\m{Z}^3 \rightarrow \left\{0,1\right\}$ is built in two phases.
\\

\textbf{Phase 1}
\\

First, a mapping $g^1_{\omega}:\m{Z}^3 \rightarrow \left\{0,1\right\}$ is built through the following step-by-step procedure:
\begin{itemize}
\item
Step $k=0$: take $g^1_{\omega}:=0$ as the initial distribution of payoffs. 
\item
Step $k \geq 1$: for each $z \in \m{Z}^3$ such that $X^{1}_{k,z}(\omega)=1$, consider the square centered on $z$, with side length $T_k$, orthogonal to the vector $(1,0,0)$, which shall be called \textit{1-square of length $T_k$}. For each $z' \in \m{Z}^3$ that lies in the square, set $g^1_{\omega}(z'):=1$ (note that $g^1_{\omega}(z')$ may have already been defined as being 1 at an earlier step).
\end{itemize}
At the end of Phase 1, we have a map $g^1_{\omega}: \m{Z}^3 \rightarrow \left\{0,1\right\}$. Then, go to Phase 2:

\vspace{0.5cm}
\textbf{Phase 2}
\vspace{0.5cm}
\\
The mapping $g_{\omega}:\m{Z}^3 \rightarrow \left\{0,1\right\}$ is built through the following step-by-step procedure:
\begin{itemize}
\item
Step $k=0$: take $g_{\omega}:=g^1_{\omega}$ as the initial distribution of payoffs.
\item
Step $k \geq 1$: for each $z \in \m{Z}^3$ such that $X^{2}_{k,z}(\omega)=1$, consider the square centered on $z$, with side length $T_k$, and orthogonal to the vector $(0,1,0)$, which shall be called \textit{0-square of length $T_k$}. For each $z' \in \m{Z}^3$ that lies in the square, proceed as follows:
\begin{itemize}
\item
If $z'$ lies in a 1-square of size $T_{k'}$ with $k' \geq k$, then $g_{\omega}(z')$ is not modified. 
\item
Otherwise, set $g_{\omega}(z'):=0$. 
\end{itemize}
\end{itemize}
\newpage
The figure below represents 1-squares in green and 0-squares in red:
\begin{center}
\includegraphics[scale=0.3]{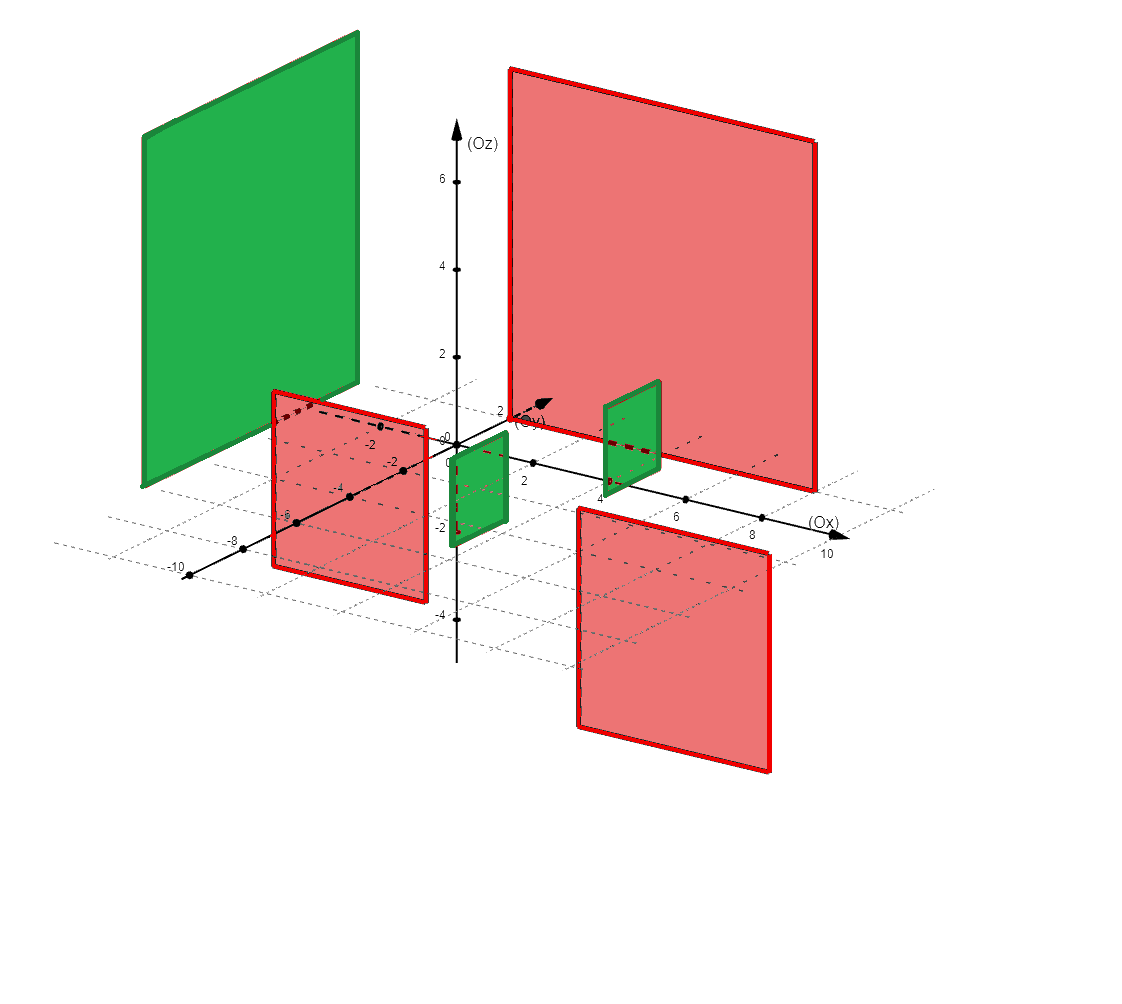}
\end{center}
A key feature of Phase 2 is that, whenever a 0-square intersects a 1-square, the intersection ``turns 1'' if the 0-square's length is smaller than the one of the 1-square, and ``turns 0'' otherwise. 

Moreover, the law of $g_\omega$ is stationary and ergodic.  
Indeed, let $A$ be an invariant event. For all $n \in \m{N^*}$, we denote by $\mathcal{F}_n$ the $\sigma$-algebra generated by the random variable 
\begin{equation*}
    (X^j_{k,z})_{(j,k,z) \in \left\{1,2\right\} \times [1..n] \times [-n..n]^3}\, .
\end{equation*}
For all $\varepsilon > 0$, there exists $n \in \m{N}^*$ and an event $A_n \subset A$ such that $A_n$ is $\mathcal{F}_n$-measurable and $\m{P}(A_n) \geq \m{P}(A) - \varepsilon$.\\
We define $A_n'$ as the translation of $A_n$ with respect to the vector $(0, 0, n+1+2^{n-1})$. \\
By hypothesis $A$ is $\m{Z}^3$--invariant, hence $A_n' \subset A$. Furthermore we have $\m{P}(A_n) = \m{P}(A_n')$, because the law of $g_\omega$ is stationary. \\
By construction, the events $A_n$ and $A_n'$ are independent. This implies
\begin{subequations}
    \begin{align*}
        \m{P}(A \cap A) &\leq \m{P}(A_n \cap A_n') + 2 \varepsilon \\
            &= \m{P}(A_n) \m{P}(A_n') + 2 \varepsilon \\
            &\leq \m{P}(A)^2 + 5\varepsilon.
    \end{align*}
    \end{subequations}
Since $\varepsilon$ is chosen arbitrarily, we have $\m{P}(A) \leq \m{P}(A)^2$.
We conclude that $$\m{P}(A) \in \{0, 1\}.$$

A 1-square is \textit{complete} if all its elements $z \in \m{Z}^3$ satisfy $g_{\omega}(z)=1$: in other words, it is not intersected by a larger 0-square. In the same vein, a 0-square is \textit{complete} if all its elements $z$ satisfy $g_{\omega}(z)=0$.

Equip $\m{Z}^3$ with the 1-norm. 
The construction of $(g_{\omega})_{\omega \in \Omega}$ has been made such that for all $\varepsilon>0$, $\m{P}$-almost surely, the following properties hold:
\begin{property} \label{prop:good_scenario}
There exists a sequence $(n_k)$ going to infinity such that for all $k \geq 1$, there exists a complete 1-square of length $4 T_{n_k}$ at a distance smaller or equal to $\varepsilon T_{n_k}$ from the origin. 
\end{property}
\begin{property} \label{prop:bad_scenario}
There exists a sequence $(n'_k)$ going to infinity such that for all $k \geq 1$, there exists a complete 0-square of length $4 T_{n_k}$ at a distance smaller or equal to $\varepsilon T_{n'_k}$ from the origin.
\end{property}
\begin{proof}We only demonstrate Property~\ref{prop:good_scenario} because the demonstration of Property~\ref{prop:bad_scenario} is similar. \\
Let $\varepsilon > 0$. For any $k \geq 1$, we consider the event $A_k$ ``the center of a complete 1-square of length $4 T_{k}$ is at a distance smaller or equal to $\varepsilon T_{k}$ from the origin''. For this event to be realized, it is sufficient that events $B_k$ and $C_k$ are realized, where
    \begin{itemize}
        \item $B_k$ is the event ``at least one of the points located at a distance less than or equal to $\varepsilon T_k$ from the origin is the center of a 1-square of length $4T_k$''.
        \item $C_k$ is the event ``There is no 0-square  of length greater than $4 T_k$  in the ball of radius $(4 + \varepsilon)T_k$ centered at the origin.''
    \end{itemize}
    Denote $\lfloor x \rfloor$ the integer part of $x$. 
We decide if a point was the center of a 1-square according to a Bernoulli of parameter $T_k^{-3}$, so 
    \begin{equation*}
        \m{P}(B_k) \geq 1 - (1 - T_k^{-3})^{(\lfloor \varepsilon T_k \rfloor^3 )} 
    \end{equation*}
As we have
    \begin{equation*}
        \lim_{k \to \infty} \left[ 1 - (1 - T_k^{-3})^{(\lfloor \varepsilon T_k \rfloor^3 )} \right] > 0, 
    \end{equation*}
we conclude that
    \begin{equation*}
    \liminf_{k \to \infty} \m{P}(B_k) > 0.
    \end{equation*}
Moreover, we have
    \begin{equation}
         \m{P}(C_k) \geq \prod_{k' \geq k+1} (1 - T_{k'}^{-3})^{[4 T_{k'} + (4 + \varepsilon)T_k]^2\cdot[(4+\varepsilon) T_k] }.
    \end{equation}
Since $T_k=2^k$, we have
    \begin{equation}
        \liminf_{k \to +\infty} \left(  \prod_{k' \geq k+1} (1 - T_{k'}^{-3})^{[4 T_{k'} + (4 + \varepsilon)T_k]^2\cdot[(4+\varepsilon) T_k] }  \right) > 0
    \end{equation}
Therefore
    \begin{equation}
        \liminf_{k \to +\infty}  \m{P}(A_k) \geq \liminf_{k \to +\infty}  \m{P}(B_k \cap C_k ) = \liminf_{k \to +\infty}   \m{P}( B_k )  \cdot \m{P}(C_k ) > 0.
    \end{equation}
Moreover, 
    \begin{equation*}
        \m{P}( \limsup_{k \to +\infty} A_k) \geq  \liminf_{k \to +\infty}  \m{P}(A_k) > 0.
    \end{equation*}
Since the law of $g_\omega$ is stationary and ergodic and $\limsup_{k \to +\infty} A_k$ is invariant by space translation, we conclude that 
    \begin{equation*}
          \m{P}( \limsup_{k \to +\infty} A_k) = 1,
    \end{equation*}
    which proves Property 1. 
\end{proof}
We can now deduce the following proposition:
\begin{proposition} \label{prop:discrete_CE}
For any $z \in \m{Z}^3$, the following  properties hold $\m{P}$-almost surely: 
\begin{equation*}
\limsup_{n \rightarrow+\infty} v^\omega_n(z) = 1 \quad \text{and} \quad 
\liminf_{n \rightarrow+\infty} v^\omega_n(z) =0. 
\end{equation*}
\end{proposition}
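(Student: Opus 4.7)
My plan is to use Properties~\ref{prop:good_scenario} and~\ref{prop:bad_scenario} directly, by exhibiting a strategy for Player~1 that exploits nearby complete 1-squares for the $\limsup$ statement, and a symmetric strategy for Player~2 built on 0-squares for the $\liminf$ statement. The geometric reason this works is that 1-squares are orthogonal to $(1,0,0)$, the direction controlled by Player~1, while 0-squares are orthogonal to $(0,1,0)$, the direction controlled by Player~2; in each case the relevant player can ``lock'' the state in the plane of the favourable square by freezing his own coordinate.

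For $\limsup_n v_n(0) = 1$, fix $\varepsilon > 0$. By Property~\ref{prop:good_scenario}, on an event of full probability there is a sequence $n_k \to \infty$ together with centres $z^{k} = (x_k, y_k, z_k)$ of complete 1-squares of side length $4T_{n_k}$ with $\|z^{k}\|_1 \leq \varepsilon T_{n_k}$. In the $n$-stage game with $n := 2T_{n_k}$ starting from $0$, Player~1 first plays a navigation phase of length $|x_k| \leq \varepsilon T_{n_k}$, choosing $i_m = \operatorname{sign}(x_k - x_m)$ to drive $x$ to $x_k$. Since Player~2 shifts $y$ by at most one per stage and $z$ auto-increments by $1$, at the end of this phase the state is some $(x_k, y, z)$ with $|y - y_k| \leq 2\varepsilon T_{n_k}$ and $|z - z_k| \leq 2\varepsilon T_{n_k}$. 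Player~1 then plays $i = 0$ for the remaining stages, locking $x = x_k$; in the plane $\{x = x_k\}$ the state leaves the 1-square only when $|y - y_k|$ or $|z - z_k|$ exceeds $2T_{n_k}$, and Player~2 needs at least $2T_{n_k}(1 - \varepsilon)$ stages to push $y$ to the boundary, while the auto-increment needs at least $2T_{n_k}(1 - \varepsilon)$ stages to push $z$ out. Hence at least $2T_{n_k}(1 - \varepsilon)$ of the $n$ stages occur at payoff $1$ inside the complete square, while the remaining $\leq 2\varepsilon T_{n_k}$ stages contribute at most $2\varepsilon T_{n_k}$, yielding $v_n(0) \geq 1 - 2\varepsilon$ on this event. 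Intersecting the full-measure events for $\varepsilon = 1/m$ and using $v_n \leq 1$ gives $\limsup_n v_n(0) = 1$ almost surely.

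The $\liminf$ part is symmetric, using Property~\ref{prop:bad_scenario} and a complete 0-square of side $4T_{n'_k}$ at distance $\leq \varepsilon T_{n'_k}$ of the origin. Player~2 first drives $y$ to the centre coordinate $y_k$ in at most $\varepsilon T_{n'_k}$ stages by playing $j_m = \operatorname{sign}(y_k - y_m)$ reactively, then plays $j = 0$ for the rest; Player~1's only escape from the 0-square is to push $x$ across an interval of width $\geq 2T_{n'_k}(1 - \varepsilon)$, while the auto-increment forces $z$ out only after $\geq 2T_{n'_k}(1 - \varepsilon)$ stages, so with $n := 2T_{n'_k}$ one gets $v_n(0) \leq 2\varepsilon$ infinitely often, whence $\liminf_n v_n(0) = 0$ almost surely. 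To extend both statements from $0$ to an arbitrary $z \in \m{Z}^3$, I would use the measure-preserving identity $v_n^{\omega}(z) = v_n^{\tau_z(\omega)}(0)$, which is immediate from the stationarity relation $g_{\tau_z(\omega)}(z',i,j) = g_{\omega}(z+z',i,j)$ and the definition of $v_n$. The main obstacle I anticipate is the geometric bookkeeping in the navigation phase: even though Player~2 (respectively Player~1) plays second within each stage and can react to his opponent's move, one must verify that the $\ell^1$-budget $\varepsilon T_{n_k}$ suffices to land the state deep enough inside the target square that the subsequent passive phase fills a fraction $1 - O(\varepsilon)$ of the full horizon $2T_{n_k}$ with the correct payoff.
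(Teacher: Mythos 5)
Your proposal is correct and follows essentially the same route as the paper: use Property~\ref{prop:good_scenario} (resp.\ \ref{prop:bad_scenario}) to find a complete 1-square (resp.\ 0-square) near the origin, have the relevant player navigate to it in $O(\varepsilon T_{n_k})$ stages and then freeze his own coordinate so the state cannot leave the square within the game's horizon, and extend from $0$ to general $z$ by stationarity. The only (immaterial) difference is bookkeeping: the paper takes the game duration to be $T_\ell$ rather than your $2T_{n_k}$, which makes the containment argument even simpler since the total displacement over the whole game is then at most $(1+\varepsilon)T_\ell < 2T_\ell$.
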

\begin{proof}
Let $k \geq 1$. 
Let $\omega \in \Omega$ and $\ell \geq 1$ such that there exists a 1-square of length $4T_\ell$
at distance smaller or equal to $\varepsilon T_\ell$ from the origin. Irrespective of Player 2'strategy, Player 1 can bring the state to the 1-square in less than $\varepsilon T_\ell+1$ stages. Then, by playing 0, she ensures that the state remains in the 1-square until the end of the game: indeed, the square has length $4T_\ell$, and the duration of the game is $T_\ell$. Thus, the value of the $T_\ell$-stage game is larger than $(1-\varepsilon-1/T_{\ell})$. Since Property 1 holds $\m{P}$-almost surely, this yields
$\limsup_{n \rightarrow+\infty} v^\omega_n(0) \geq 1-\varepsilon$. Because $\varepsilon$ is arbitrary and the payoff function is majorized by 1, it follows that $\limsup_{n \rightarrow+\infty} v^\omega_n(0)=1$ $\m{P}$-almost surely. Since the distribution of $(g_\omega)$ is stationary, the same property holds for any initial state $z \in \m{Z}^3$. A similar reasoning can be made for Player 2, to prove the second equality. 
\end{proof}

 \section{Connection with homogenization of Hamilton-Jacobi equations}
 \subsection{Stochastic homogenization} \label{subsection: SH}
  Hamilton-Jacobi equations is a class of Partial Differential Equations that is widely used to describe Hamiltonian systems arising in mechanics. A central research area is to study limit properties of solutions when the time scale and space scale become larger and larger: 
  when solutions converge in some sense, we say that the PDE system \textit{homogenizes}. 

Formally, let $d \geq 1$ and $H : \m{R}^d \times \m{R}^d \to \m{R}$ be a continuous function called \textit{Hamiltonian}. Assume that $H$ is periodic with respect to $x$, and \textit{coercive}, that is, $H(p,x) \rightarrow +\infty$ 
when $\left\|p\right\| \rightarrow +\infty$, uniformly in $x$.
Consider the equation 
\begin{equation} \label{eq:system_SHD}
\left\{ \begin{array}{ll}
            \partial_t u^{\varepsilon} (t,x) + H\left( \nabla u^{\varepsilon}(t,x), \frac{x}{\varepsilon}\right)  = 0  & \; \text{in} \; (0, + \infty ) \times \m{R}^d \\ 
            u^{\varepsilon}(0,x)  = u_0(x) & \; \text{in} \; \m{R}^d 
          \end{array} \right.
\end{equation}
where $\partial_t$ designates the derivative with respect to $t$, $\nabla$ the derivative with respect to $x$, and $u_0$ is a bounded and uniformly continuous function. 
Lions, Papanicolaou and Varadhan \cite{LPV86} have proved that as $\varepsilon$ vanishes, the viscosity solution $u^\varepsilon$ converges locally uniformly in $(t,x)$ to the unique solution of a system of the form :
\begin{equation} \label{eq:sys_SHL}
\left\{ \begin{array}{ll}
            \partial_tu(t,x) + \bar{H}( \nabla u(t,x)) = 0  & \; \text{in} \; (0, + \infty ) \times \m{R}^d \\
            u(0,x)  = u_0(x) & \; \text{in} \; \m{R}^d
          \end{array} \right.
\end{equation}
where $\bar{H}$ is called the \textit{effective Hamiltonian}. 
To get a better understanding of what such a convergence result means, notice that when $u_0$ is linear, then a simple change of variables shows that for all $(t,x) \in (0,+\infty) \times \m{R}^d$,
\begin{equation*}
u^{\varepsilon}(t,x)=\varepsilon u^1\left(\frac{t}{\varepsilon},\frac{x}{\varepsilon}\right).
\end{equation*}
Hence, taking $\varepsilon$ to 0 corresponds to a change in time scale and space scale. To give a concrete example, when the system (\ref{eq:system_SHD}) describes \textit{fire-front propagation} \cite{peters01,williams85} (e.g., a fire in a forest), then the front shape at time $t$ can be described by the level sets of $u^1$. Hence, taking $\varepsilon$ to 0 corresponds to looking at the front shape after a very long time and at a very large scale, putting aside the transitory regime and the local space variations. 

Following this seminal work, huge efforts were developed to generalize homogenization to random Hamiltonians. In the previous example, randomness appears naturally by considering the fact that the composition of the forest (e.g. the type of trees or the forest density) may be unknown at a local scale. This can be modeled by adding randomness to the system (\ref{eq:system_SHD}). Formally, Let $(\Omega,\mathcal{F},\m{P})$ be a probability space, equipped with a family of measurable transformations $\tau_y: \Omega \rightarrow \Omega, y \in \m{R}^d$, satisfying for all $x, y \in \m{R}^d$ and $F \in \mathcal{F}$
\begin{equation*}
\m{P}(\tau_x(F))=\m{P}(F) \quad \text{and} \quad \tau_{x+y}=\tau_x \circ \tau_y.
\end{equation*}
Moreover, $\tau$ is \textit{ergodic}: if $F \in \mathcal{F}$ is such that for all $x \in \m{R}^d$, $\tau_x(F)=F$, then $\m{P}(F) \in \left\{0,1\right\}$. 

Let $H : \m{R}^d \times \m{R}^d  \times \Omega \to \m{R}$ that is measurable and \textit{stationary}, that is, for all $(p,x,y,\omega)$,
\begin{equation*}
H(p,x,\tau_y\omega)=H(p,x+y,\omega).
\end{equation*}
Informally, these assumptions can be summarized by saying that the law of $H$ is invariant by space translation, and any event that is invariant by space translation has either probability 0 or 1. 
In this case, $H$ is said to be $\textit{stationary and ergodic}$. 
 Moreover, we assume that $H$ is coercive with respect to the first variable, that is 
 $H(p,x,\omega) \rightarrow +\infty$ when $\left\|p\right\| \rightarrow +\infty$, uniformly in $(x,\omega)$. Equation (\ref{eq:system_SHD}) becomes 
\begin{equation} \label{eq:system_SH}
\left\{ \begin{array}{ll}
            \partial_t u^{\varepsilon} (t,x,\omega) + H\left( \nabla u^{\varepsilon}(t,x,\omega), \frac{x}{\varepsilon}, \omega\right)  = 0  & \; \text{in} \; (0, + \infty ) \times \m{R}^d \\ 
            u^{\varepsilon}(0,x,\omega)  = u_0(x) & \; \text{in} \; \m{R}^d 
          \end{array} \right.
\end{equation}
The space periodicity of the Hamiltonian in equation (\ref{eq:system_SHD}) has thus been replaced by the stationarity and ergodicity properties. 
 
When $H$ is convex with respect to $p$ and under standard regularity assumptions, Souganidis \cite{souganidis99} and Rezakhanlou and Tarver \cite{RT00} have proved that 
$u^\varepsilon$ converges almost surely in $\omega$ and locally uniformly in $(t,x)$ to the unique solution of a system of the same form as (\ref{eq:sys_SHL}), hence a \textit{deterministic} system: randomness disappears with the scale change. 


These results have been extended to various frameworks, still under the assumption that the Hamiltonian is convex in $p$ (see \cite{LS05,KRV06,LS10,LS03,S09,AS131,AT141}). Quantitative results about the speed of convergence have been obtained in \cite{ACS14,MN12,AC15}.

When $H$ is not assumed to be convex (\textit{non-convex case}), only very specific frameworks were treated: for level-set convex Hamiltonians \cite{AS132}, when the law of $H$ is invariant by rotation \cite{fehrman14}, in one-dimension \cite{ATY152,G15,DK20,yilmaz20}, when the law of $H$ satisfies a \textit{finite range condition} and $H$ is positively homogenous or star-shaped \cite{AC15b,FS17}. 
The second author \cite{Z17} proved recently that homogenization does not hold in the general non-convex case. This work builds on the connection between Hamilton-Jacobi and differential games, that we detail below.

\subsection{Connection with games} \label{sec:SH_to_games}

Non-convex Hamilton-Jacobi equations can be associated with a class of continuous-time zero-sum games called \textit{zero-sum differential games}. For conciseness purpose, we will avoid technical aspects of the model, such as definition of strategies, and regularity assumptions.

Let $(\Omega,\mathcal{F},\m{P})$ be a probability space. Let $A$ and $B$ be two sets called \textit{control sets}. To each $\omega \in \Omega$, we associate a \textit{payoff function} $g_\omega: \m{R}^d \times A \times B \rightarrow \m{R}$. Let $f: \m{R}^d \times A \times B \rightarrow \m{R}^d$ be a \textit{dynamics}, and $g_0:\m{R}^d \rightarrow \m{R}$  be a \textit{terminal payoff function}. Each $\omega \in \Omega$ determines a \textit{zero-sum differential game}, where at each time $t$, Players 1 and 2 choose controls $a(t) \in A$ and $b(t) \in B$, knowing $\omega$ and the past history. The \textit{state variable} $x(t)$ follows the ordinary differential equation
\begin{equation*}
\dot{x}(t)=f(x(t),a(t),b(t)).
\end{equation*}
In the game with duration $t$, Player 1 (resp., 2) aims at maximizing (resp., minimizing) the total payoff
\begin{equation*}
 \int_0^t g_\omega(x(s),a(s),b(s)) ds+g_0(x(t)). 
\end{equation*}
In differential game literature, the usual convention is that Player 1 minimizes a \textit{cost function}, while Player 2 maximizes it. In discrete-time games, the usual convention is inverse. In order to have a unified approach, in this manuscript the maximizing player is always Player 1, and the minimizing player is always Player 2. 
  
Note that $\omega$ plays the role of a parameter, that is drawn from $\m{P}$ at the outset of the game, and announced to players. Thus, we have defined a collection of zero-sum differential games, indexed by $\omega$. 
\\
Under the standard Isaacs assumptions, 
this game has a value, that is, the maxmin over strategies of the total payoff is equal to the minmax. As in the discrete-time case, the value can be interpreted as Player 1's payoff (or Player 2's opposite payoff) when they play rationally. The value $u(t,x,\omega)$ of the game with duration $t$ and starting from $x \in \m{R}^d$ satisfies the Hamilton-Jacobi equation
\begin{equation*}
\left\{ \begin{array}{ll}
            \partial_t u (t,x,\omega) + H\left( \nabla u(t,x,\omega),x,\omega\right)  = 0  & \; \text{in} \; (0,+\infty) \times \m{R}^d \\
            u(0,x,\omega)  = g_0(x) & \; \text{in} \; \m{R}^d
          \end{array} \right.
\end{equation*}
where 
\begin{equation} \label{eq:diff_to_HJB}
H(p,x,\omega):=-\sup_{a \in A} \inf_{b \in B} \left\{ g_\omega(x,a,b)+f(x,a,b)\cdot p\right\} \, .
\end{equation}
This corresponds to the same system as (\ref{eq:system_SH}), for $\varepsilon=1$. Hence, the same change of variables as before gives the relation
$u^\varepsilon(x,t,\omega)=\varepsilon u(x/\varepsilon,t/\varepsilon,\omega)$. 
Thus, setting $x=0$, $t=1$, and $T:=1/\varepsilon$, stochastic homogenization implies that $(1/T) u(0,T,\omega)$ converges as $T$ tends to infinity: the differential game starting from 0 has a \textit{limit value}. In fact, such a convergence property for $x=0$ and $t=1$ is most of time a big step in proving stochastic homogenization. Moreover, considering Hamilton-Jacobi equations that come from a differential game is essentially without loss of generality, thanks to the representation formulas of Evans and Souganidis \cite{ES84}. Hence, understanding existence of limit value in the class of differential games defined in this section will certainly infer new results on stochastic homogenization. 
The interest of considering Hamilton-Jacobi equations through the prism of games is that instead of looking merely at the PDE, one investigates the \textit{optimal trajectories} that generate this PDE, in the spirit of weak KAM theory. 
Such a \textit{strategic approach} can be translated in a discrete-time setting, where strategies are less technical to manipulate, and literature on limit value is more abundant.
In fact, the counterexample of the second author \cite{Z17} is a perfect illustration of this method, since it is based on a differential game, that is itself inspired from a discrete-time game. 
\subsection{Comparison with literature and perspectives}
This paper is written in a game framework, hence one can not compare its content directly with the results obtained in a non-convex Hamilton-Jacobi setting that are cited in Subsection \ref{subsection: SH}. Nonetheless, thanks to the previous subsection, it is possible to make the following analogies. 
\paragraph{$H$ is coercive}
It can be seen as the case where Player 2 ``controls" the dynamics, meaning that she can bring the state wherever she wants in linear time. Indeed, to make sure that $f(x,a,b) \cdot p$ is small when $\left\|p\right\|$ is large, Player 2 has to match the positive (resp., negative) components of $f(x,a,b)$ with the negative (resp., positive) components of $p$, which is typically a controllability assumption. 
\paragraph{$H$ is convex with respect to $p$} This can be seen as the 1-Player case: indeed, the Hamiltonian in (\ref{eq:diff_to_HJB}) is then minus an infimum over linear functions. Recall that for convex and coercive Hamiltonian, Souganidis \cite{souganidis99} and Rezakhanlou and Tarver \cite{RT00} have proved homogenization.
 \paragraph{$H$ is positively homogenous with respect to $p$, or its level sets are star-shaped with respect to $p=0$} In addition to coercivity, these are respectively the assumptions in \cite{AC15b} and \cite{FS17}. Under these assumptions, the problem can be reduced to a class of equations describing games where Player 2 aims at reaching an hyperplane in minimal time. 
 
 With these interpretations at hand, let us compare the results of this article with the homogenization literature. Notice that no controllability assumption is made in this paper ; hence, adapting the results to a PDE framework can yield homogenization results without coercivity assumption. This would be quite interesting since, even in the periodic case, homogenization for noncoercive Hamiltonians is not well understood \cite{cardaliaguet10}. To this aim, a crucial point will be to translate the ``oriented game assumption" to a PDE framework. To give an example of the type of Hamiltonians that are expected to satisfy the ``translated" assumption, consider in $\m{R}^2$ an Hamiltonian of the form
\begin{equation*}
H(p_1,p_2,x,\omega)=-\sup_{a \in A} \inf_{b \in B} \left\{g_\omega(x,a,b)+p_1+f(x,a,b) \cdot p_2 \right\}.
\end{equation*}
Note that this Hamiltonian is neither star-shaped, nor positively homogenous. 
\\

Going back to the existence of limit value in the discrete-time game framework, the following questions are of particular interest: 
\begin{enumerate}
\item
Does limit value exist for games that are not oriented, with or without controllability assumption? The following particular case in $\m{Z}^2$ can be considered as a benchmark: 
Player 1 has two actions, ``go up or go down", and Player 2 has two actions ``go left or go right". This describes in an obvious way the transitions of the game. As for the payoff, given some fixed parameter $p \in [0,1]$, in each state it is equal to 1 with probability $p$, and 0 otherwise.  
\item
Does limit value exist in the one-player setting?
\item
Does limit value exist when payoffs are not i.i.d., but correlated in a weak sense?
\item
Does limit value exist in 2-dimensional oriented percolation games?  
\bibliography{bibliogen.bib}
\end{enumerate}
\end{document}